\newcommand{\bdis}{\begin{displaymath}}
\newcommand{\edis}{\end{displaymath}}
\newcommand{\be}{\begin{equation}}
\newcommand{\ee}{\end{equation}}
\newcommand{\mcal}{\mathcal}
\newtheorem{theorem}{Theorem}%[section]
\theoremstyle{definition}
\newtheorem{cor}[]{Corollary}
\theoremstyle{remark}
\newtheorem{remark}[]{Remark}
\numberwithin{equation}{section}
\begin{document}

\title{Jacob's ladders and the tangent law for short parts of the Hardy-Littlewood integral}

\author{Jan Moser}

\address{Department of Mathematical Analysis and Numerical Mathematics, Comenius University, Mlynska Dolina M105, 842 48 Bratislava, SLOVAKIA}

\email{jan.mozer@fmph.uniba.sk}

\keywords{Riemann zeta-function}

\begin{abstract}
The elementary geometric properties of the Jacob's ladders \cite{7} lead to a class of new formulae for short parts of the Hardy-Littlewood integral. This class
of formulae cannot be obtained by methods of Balasubramanian, Heath-Brown and Ivic.
\end{abstract}

\maketitle

\section{Necessity of a new expression for short parts of the Hardy-Littlewood integral}

\subsection{}

Titchmarsh-Kober-Atkinson (TKA) formula (see \cite{3}, p. 141)
\be
\int_0^\infty Z^2(t)e^{-2\delta t}{\rm
d}t=\frac{c-\ln(4\pi\delta)}{2\sin(\delta)}+\sum_{n=0}^N
c_n\delta^n+\mcal{O}(\delta^{N+1})
\ee
remained  as an isolated result for the period of 56 years. We have discovered \cite{7} the nonlinear integral equation
\be \label{1.2}
\int_0^{\mu[x(T)]}Z^2(t)e^{-\frac{2}{x(T)}t}{\rm d}t=\int_0^TZ^2(t){\rm d}t ,
\ee
in which the essence of the TKA formula is encoded. Namely, we have shown in \cite{7} that the following almost-exact expression for the Hardy-Littlewood
integral
\be \label{1.3}
\int_0^T Z^2(t){\rm d}t=\frac{\varphi(T)}{2}\ln\frac{\varphi(T)}{2}+(c-\ln(2\pi))\frac{\varphi(T)}{2}+c_0+\mcal{O}\left(\frac{\ln T}{T}\right)
\ee
takes place, where $\varphi(T)$ is the Jacob's ladder, i.e. an arbitrary solution to the nonlinear integral equation (\ref{1.2}).

\begin{remark}

Let us remind that
\begin{itemize}

\item[(A)] The Good's $\Omega$-theorem implies for the Balasubramanian's formula \cite{1}
\be \label{1.4}
\int_0^T Z^2(t){\rm d}t=T\ln T+(2c-1-\ln 2\pi)T+R(T),\quad R(T)=\mcal{O}(T^{1/3+\epsilon})
\ee
that
\bdis
\limsup_{T\to+\infty}|R(T)|=+\infty ,
\edis
i.e. the error term in (\ref{1.4}) is unbounded at $T\to\infty$.

\item[(B)] In the case of our result (\ref{1.3}) the error term tends to zero as $T$ goes to infinity, namely,
\bdis
\lim_{T\to\infty} |r(T)|=0,\quad r(T)=\mcal{O}\left(\frac{\ln T}{T}\right) ,
\edis
i.e. our formula is almost exact (see \cite{7}).

\end{itemize}

\end{remark}

\subsection{}

The Balasubramanian's formula implies, for $U_0=T^{1/3+2\epsilon}$,
\be \label{1.5}
\int_T^{T+U_0}Z^2{t}{\rm d}t=U_0\ln T+(2c-\ln 2\pi)U_0+\mcal{O}(T^{1/3+\epsilon}) .
\ee
Furthermore let us remind the Heath-Brown's estimate (see \cite{5}, (7.20) p. 178)
\be \label{1.6}
\int_{T-G}^{T+G}Z^2(t){\rm d}t=\mcal{O}\left\{ G\ln T+G\sum_K (TK)^{-1/4}\left( |S(K)|+K^{-1}\int_0^K |S(x)|{\rm d}x \right)e^{-G^2K/T} \right\}
\ee
(for the definition of used symbols see \cite{5}, (7.21)-(7.23)), uniformly for $T^\epsilon\leq G\leq T^{1/2-\epsilon}$. And finally we add the
Ivic' estimate, \cite{5}, (7.62),
\be \label{1.7}
\int_{T-G}^{T+G}Z^2(t){\rm d}t=\mcal{O}(G\ln^2 T),\quad G\geq T^{1/3-\epsilon_0},\quad \epsilon_0=\frac{1}{108}\approx 0.009 .
\ee

\begin{remark}

It is obvious that short intervals: $[T-G,T+G]$ with $G\in (0,1)$ are not included in the methods of Balasubramanian, Heath-Brown and Ivic leading to
(\ref{1.5}) and (\ref{1.6}), (\ref{1.7}).

\end{remark}

In this work we present a new method how to deal with short parts ($1\leq U\leq T^{1/3+\epsilon}$) and microscopic parts
($0<U<1$) of the Hardy-Littlewood integral
\be \label{1.8}
\int_T^{T+U}Z^2(t){\rm d}t .
\ee
To attain this goal we will use only elementary geometric properties of Jacob's ladders.

\begin{remark}

Our new and elementary method leads to new expressions for the integrals of type (\ref{1.8}) and these new expressions cannot be derived by means of
methods of Balasubramanian, Heath-Brown and Ivic.

\end{remark}

\begin{remark}

We will see that the Jacob's ladders (by means of the formula (\ref{1.3})) discover the feature of short and microscopic parts of the Hardy-Littlewood
integral. From this point of view, the Jabob's ladders play the role of \emph{Golem's shem} for the mentioned Hardy-Littlewood integral.

\end{remark}

\section{Tangent law}

\subsection{}

The basic idea is in the following theorem.

\begin{theorem}
For $0<U<U_0=T^{1/3+2\epsilon}$ the following is true
\begin{eqnarray} \label{2.1}
\int_T^{T+U}Z^2(t){\rm d}t&=&\left(U\ln\frac{\varphi(T)}{2}-aU\right)\tan\left[ \alpha(T,U)\right]+\nonumber \\
&+&\mcal{O}\left(\frac{1}{T^{1/3-4\epsilon}}\right),\ a=\ln 2\pi-1-c,
\end{eqnarray}
where $\alpha=\alpha(T,U)$ is the angle of the chord of the curve $y=1/2\varphi(T)$ that binds the points
\be \label{2.2}
[T,\frac{1}{2}\varphi(T)],\ [T+U,\frac{1}{2}\varphi(T+U)] .
\ee

\end{theorem}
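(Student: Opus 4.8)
The plan is to read off (\ref{2.1}) from the almost-exact formula (\ref{1.3}) by a difference argument, the only geometric ingredient being the elementary formula for the slope of a chord. Applying (\ref{1.3}) at $T+U$ and at $T$ and subtracting, and using that $0<U<U_0=T^{1/3+2\epsilon}$ forces $T+U\asymp T$ (so that the constant $c_0$ cancels and the two $\mcal{O}$-terms merge into one), I would obtain, writing
\[
u=\frac{\varphi(T)}{2},\qquad v=\frac{\varphi(T+U)}{2},\qquad \Delta=v-u ,
\]
the identity
\[
\int_T^{T+U}Z^2(t)\,{\rm d}t=(v\ln v-u\ln u)+(c-\ln 2\pi)\Delta+\mcal{O}\left(\frac{\ln T}{T}\right).
\]
The geometric input is then just that the chord of $y=\frac12\varphi$ through the points (\ref{2.2}) has slope $\tan[\alpha(T,U)]=\Delta/U$, i.e.\ $\Delta=U\tan[\alpha(T,U)]$.

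Next I would expand $v\ln v-u\ln u=f(v)-f(u)$ with $f(x)=x\ln x$, $f'(x)=\ln x+1$, by Taylor's formula with Lagrange remainder (here $f''(x)=1/x$):
\[
v\ln v-u\ln u=(\ln u+1)\Delta+\mcal{O}\left(\frac{\Delta^2}{\varphi(T)}\right),
\]
the remainder being harmless since the intermediate point lies between $u$ and $v$, both of order $T$ in view of $\varphi(T)\asymp T$ (see \cite{7}). Substituting this, using $1+c-\ln 2\pi=-(\ln 2\pi-1-c)=-a$ to absorb the constants, and then replacing $\Delta$ by $U\tan[\alpha(T,U)]$, I reach
\[
\int_T^{T+U}Z^2(t)\,{\rm d}t=\left(U\ln\frac{\varphi(T)}{2}-aU\right)\tan[\alpha(T,U)]+\mcal{O}\left(\frac{\Delta^2}{\varphi(T)}\right)+\mcal{O}\left(\frac{\ln T}{T}\right),
\]
so that (\ref{2.1}) will follow once the two error terms are shown to be $\mcal{O}(T^{-1/3+4\epsilon})$.

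The decisive point is the bound $\Delta=\mcal{O}(T^{1/3+2\epsilon})$: it yields $\Delta^2/\varphi(T)\ll T^{2/3+4\epsilon}/T=T^{-1/3+4\epsilon}$, while $\ln T/T$ is trivially smaller. I would get this from the nonnegativity of $Z^2$ together with Balasubramanian's formula (\ref{1.5}): for every $U$ in the range,
\[
0\le\int_T^{T+U}Z^2(t)\,{\rm d}t\le\int_T^{T+U_0}Z^2(t)\,{\rm d}t=U_0\ln T+\mcal{O}(U_0)=\mcal{O}(T^{1/3+2\epsilon}\ln T) ,
\]
and, since the convexity of $x\ln x$ sandwiches $f(v)-f(u)+(c-\ln 2\pi)\Delta$ between $(\ln u-a)\Delta$ and $(\ln v-a)\Delta$ with $\ln u,\ln v\asymp\ln T$, this forces $|\Delta|\ll T^{1/3+2\epsilon}$ (for the microscopic range $0<U<1$ the quantity $\Delta$ is of course even smaller). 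I expect the main obstacle to be essentially bookkeeping: pinning down $\varphi(T)\asymp T$ and $\ln\frac{\varphi(T)}{2}\asymp\ln T$ to the precision demanded by the $\epsilon$-exponents (quoting \cite{7}), and making the convexity/bootstrap estimate for $\Delta$ fully rigorous; after that the collapse of the constants into $a$ and of $\Delta$ into $U\tan[\alpha(T,U)]$ is mechanical.
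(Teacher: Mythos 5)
Your proposal is correct and follows essentially the same route as the paper: subtract the almost-exact formula (\ref{1.3}) at $T+U$ and at $T$, Taylor-expand $x\ln x$ to isolate the main term $\frac{1}{2}[\varphi(T+U)-\varphi(T)](\ln\frac{\varphi(T)}{2}-a)$, bound $\varphi(T+U)-\varphi(T)\leq\varphi(T+U_0)-\varphi(T)=\mcal{O}(T^{1/3+2\epsilon})$ by monotonicity plus Balasubramanian's formula (\ref{1.5}), and identify $\frac{1}{2}[\varphi(T+U)-\varphi(T)]=U\tan\alpha$. The only (harmless) deviation is that you extract the bound on the increment of $\varphi$ via a convexity sandwich, whereas the paper first tames the quadratic remainder at $U=U_0$ by quoting $[\varphi(T+U_0)-\varphi(T)]/T=\mcal{O}(1)$ from \cite{7}.
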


\begin{proof}

As a consequence of (\ref{1.3}) we have
\begin{eqnarray} \label{2.3}
\int_T^{T+U}Z^2(t){\rm d}t&=&\frac{1}{2}[\varphi(T+U)-\varphi(T)]\left( \ln\frac{\varphi(T)}{2}-a\right)+\nonumber \\
&+&\mcal{O}\left\{ \frac{[\varphi(T+U)-\varphi(T)]^2}{T} \right\}+\mcal{O}\left(\frac{\ln T}{T}\right) .
\end{eqnarray}
Using the equality (see \cite{7}, (5.2))
\bdis
\frac{\varphi(T+U_0)-\varphi(T)}{T}=\mcal{O}(1) ,
\edis
and (\ref{2.3}) with $U=U_0$ and the Balasubramanian formula (\ref{1.5}) we obtain the following inequality
\be \label{2.4}
\varphi(T+U)-\varphi(T)\leq \varphi(T+U_0)-\varphi(T)=\mcal{O}(T^{1/3+2\epsilon}),\quad 0<U\leq U_0 .
\ee
And now by making use of formulae (\ref{2.3}) and (\ref{2.4}) we have
\be \label{2.5}
\int_T^{T+U}Z^2(t){\rm d}t=\frac{1}{2}[\varphi(T+U)-\varphi(T)]\ln\left( e^{-a}\frac{\varphi(T)}{2}\right)+\mcal{O}\left(\frac{1}{T^{1/3-4\epsilon}}\right) .
\ee
At this moment we need only to remind that
\bdis
\frac{1}{2}[\varphi(T+U)-\varphi(T)]=U\tan \alpha ,
\edis
where $\alpha=\alpha(T,U)$ is the angle of the chord binding the points (\ref{2.2}), and therefore (\ref{2.1}) follows from (\ref{2.5}).

\end{proof}

\subsection{}

By a finer comparison of the formulae (\ref{1.5}) and (\ref{2.1}), with $U=U_0$, and by making use of
\bdis
\frac{1}{\ln T}\ln \frac{\varphi(T)}{2}=1+\mcal{O}\left(\frac{1}{\ln^2 T}\right) ,
\edis
(for this formula see \cite{7}, (6.6)) we can derive the following asymptotic formula
\be \label{2.6}
\tan[\alpha(T,U_0)]=1-\frac{1-c}{\ln T} +\mcal{O}\left(\frac{1}{\ln^2 T}\right) ,
\ee
where $\alpha(T,U_0)$ is the angle of the chord binding the points
\be \label{2.7}
[T,\frac{1}{2}\varphi(T)],\ [T+U_0,\frac{1}{2}\varphi(T+U_0)] .
\ee

\begin{remark}

The asymptotic formula (\ref{2.6}) is the geometric consequence of the Balasubramanian formula (\ref{1.5}). The geometric consequence of the Ivic
estimate (\ref{1.7}) is the following estimate
\bdis
\tan[\alpha(T,G)]=\mcal{O}(\ln T) .
\edis

\end{remark}

\subsection{}

Let us consider the set of all chords of the curve $y=\frac{1}{2}\varphi(T)$ which are parallel to the main chord that binds the points (\ref{2.7}).
Let the generic chord of this class bind the points $[N,\frac{1}{2}\varphi(N)]$ and $[M,\frac{1}{2}\varphi(M)]$. Then from (\ref{2.1}) and
(\ref{2.6}) we obtain the following

\begin{cor}

There is continuum of intervals $[N,M]\subset [T,T+T^{1/3+2\epsilon}]$ for which the following formula holds true
\begin{eqnarray} \label{2.8}
\int_N^M Z^2(t){\rm d}t&=&(M-N)\ln N +(2c-\ln 2\pi)(M-N)+\nonumber \\
&+&\mcal{O}\left(\frac{M-N}{\ln T}\right)+\mcal{O}\left(\frac{1}{T^{1/3-4\epsilon}}\right) .
\end{eqnarray}

\end{cor}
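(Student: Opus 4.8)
The plan is to read the Corollary as a direct unwinding of the tangent law (Theorem, equation~(\ref{2.1})) once one restricts to chords of $y=\tfrac12\varphi(T)$ that are parallel to the main chord through the points~(\ref{2.7}). First I would fix such a parallel chord, say the one binding $[N,\tfrac12\varphi(N)]$ and $[M,\tfrac12\varphi(M)]$ with $[N,M]\subset[T,T+T^{1/3+2\epsilon}]$, and set $U=M-N$. Parallelism means $\tan[\alpha(N,U)]=\tan[\alpha(T,U_0)]$, so the asymptotic formula~(\ref{2.6}) applies verbatim to this chord: $\tan[\alpha(N,U)]=1-\frac{1-c}{\ln T}+\mcal{O}(1/\ln^2T)$.

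Next I would apply the Theorem with the base point $N$ in place of $T$ and length $U=M-N\leq T^{1/3+2\epsilon}$ (the hypothesis $0<U<U_0$ holds since $M-N\leq T^{1/3+2\epsilon}$), giving
\be
\int_N^M Z^2(t){\rm d}t=\left(U\ln\frac{\varphi(N)}{2}-aU\right)\tan[\alpha(N,U)]+\mcal{O}\left(\frac{1}{T^{1/3-4\epsilon}}\right),
\ee
with $a=\ln2\pi-1-c$. Now substitute the value of $\tan[\alpha(N,U)]$ from~(\ref{2.6}) and expand the product. The main term is $\left(U\ln\frac{\varphi(N)}{2}-aU\right)\cdot 1$, and using $\ln\frac{\varphi(N)}{2}=\ln N+\mcal{O}(1/\ln N)$ (which follows from the relation $\frac{1}{\ln T}\ln\frac{\varphi(T)}{2}=1+\mcal{O}(1/\ln^2T)$ quoted before~(\ref{2.6}), applied at $N$, together with $\ln N=\ln T+\mcal{O}(1)$ on the interval in question) this becomes $U\ln N-aU+\mcal{O}(U/\ln T)$. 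Since $-a=1+c-\ln2\pi$, one still needs to produce the stated coefficient $2c-\ln2\pi$; the remaining $c$ comes precisely from the correction term in $\tan[\alpha(N,U)]$: the cross term $\left(U\ln\frac{\varphi(N)}{2}\right)\cdot\left(-\frac{1-c}{\ln T}\right)=-U(1-c)\cdot\frac{\ln N+\mcal{O}(1/\ln N)}{\ln T}=-(1-c)U+\mcal{O}(U/\ln T)$, whereas the term $(-aU)\cdot\left(-\frac{1-c}{\ln T}\right)=\mcal{O}(U/\ln T)$ is absorbed. Collecting, $U\ln N+(1+c-\ln2\pi)U-(1-c)U=U\ln N+(2c-\ln2\pi)U$, which is exactly the main part of~(\ref{2.8}); all discarded pieces are $\mcal{O}(U/\ln T)+\mcal{O}(T^{-1/3+4\epsilon})=\mcal{O}\left(\frac{M-N}{\ln T}\right)+\mcal{O}\left(\frac{1}{T^{1/3-4\epsilon}}\right)$.

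Finally, the ``continuum of intervals'' claim is a geometric observation rather than an analytic one: the family of chords parallel to a fixed direction and with both endpoints on the graph of the continuous (indeed, by~(\ref{1.3}), essentially monotone-increasing) function $y=\tfrac12\varphi(t)$ over the interval $[T,T+T^{1/3+2\epsilon}]$ is naturally parametrized by a real parameter (e.g.\ the abscissa $N$ of the left endpoint, which ranges over a subinterval of positive length), hence has the cardinality of the continuum; each such chord yields one pair $[N,M]$ satisfying~(\ref{2.8}). I expect the only genuine subtlety to be this last point — one must be sure that the set of admissible left endpoints $N$ for which the parallel chord still has its right endpoint $M$ inside $[T,T+T^{1/3+2\epsilon}]$ is a nondegenerate interval, which follows because the main chord through the points~(\ref{2.7}) has slope $\tan[\alpha(T,U_0)]=1+\mcal{O}(1/\ln T)$ bounded away from $0$ and $\infty$, so sliding the chord slightly keeps both endpoints on the graph; the analytic estimates above are then routine bookkeeping of error terms.
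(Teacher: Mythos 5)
Your proposal is correct and follows the same route as the paper: the Corollary is obtained by applying the tangent law (\ref{2.1}) at the base point $N$ and substituting the slope value (\ref{2.6}), which transfers to the chord over $[N,M]$ by parallelism. You have merely spelled out the constant bookkeeping ($-a-(1-c)=2c-\ln 2\pi$) and the continuum-of-chords parametrization that the paper leaves implicit.
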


\begin{remark}

Let us remark that the formula (\ref{2.8}) cannot be obtained by methods of Balasubramanian, and estimates done by Heath-Brown and Ivic,
since it holds true, for example, for system intervals of type: $[N,M]\subset (0,T^{1/6})$.

\end{remark}

\section{On microscopic parts of the Hardy-Littlewood integral in neighbourhoods of zeroes of the function $\zeta(\frac{1}{2}+iT)$}

Let $\gamma,\gamma'$ be a pair of neighbouring zeroes of the function $\zeta(\frac{1}{2}+iT)$. The function $\frac{1}{2}\varphi(T)$ is
necessarily convex on some right neighbourhood of the point $T=\gamma$, and this function is necessarily concave on some left
neighbourhood of the point $T=\gamma'$. Therefore, there exists the minimal value of $\rho\in (\gamma,\gamma')$ such that
$[\rho,\frac{1}{2}\varphi(\rho)]$ is the inflection point of the curve $y=\frac{1}{2}\varphi(T)$. At this point, by properties of the Jacob's ladders,
we have $\varphi^\prime(\rho)>0$. Let furthermore $\beta=\beta(\gamma,\rho)$ be the angle of the chord binding the points
\be \label{3.1}
[\gamma,\frac{1}{2}\varphi(\gamma)],\ [\rho,\frac{1}{2}\varphi(\rho)] .
\ee
Then we obtain by our Theorem.

\begin{cor}

For every sufficiently big zero $T=\gamma$ of the function $\zeta(\frac{1}{2}+iT)$ the following formulae describing microscopic parts of the
Hardy-Littlewood integral hold true
\begin{itemize}

\item[(A)] the continuum of formulae
\begin{eqnarray} \label{3.2}
\int_\gamma^{\gamma+U}Z^2(t){\rm d}t&=&\left( U\ln \frac{\varphi(\gamma)}{2}-aU\right)\tan\alpha+\mcal{O}\left( \frac{1}{\gamma^{1/3-4\epsilon}}\right),\nonumber \\
& & \alpha\in (0,\beta(\gamma,\rho)),\ U=U(\gamma,\alpha)\in (0,\rho-\gamma) ,
\end{eqnarray}
where $\alpha=\alpha(\gamma,U)$ is the angle of the rotating chord binding the points $[\gamma,\frac{1}{2}\varphi(\gamma)]$ and
$[\gamma+U,\frac{1}{2}\varphi(\gamma+U)]$,

\item[(B)] the continuum of formulae for chord parallel to the chord given by points (\ref{3.1})
\begin{eqnarray} \label{3.3}
\int_N^MZ^2(t){\rm d}t&=&\left[(M-N)\ln \frac{\varphi(N)}{2}-a(M-N) \right]\tan [\beta(\gamma,\rho)]+\nonumber \\
&+&\mcal{O}\left(\frac{1}{\gamma^{1/3-4\epsilon}}\right),\quad \gamma<N<M<\rho .
\end{eqnarray}
\end{itemize}

\end{cor}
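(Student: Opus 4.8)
The plan is to observe that the Corollary is essentially a re-packaging of the Theorem, with two geometric restrictions imposed so that the rotating/parallel chords in question genuinely lie on the graph of $y=\frac12\varphi(T)$ over an interval of admissible length. Since $\gamma$ is a zero of $\zeta(\tfrac12+iT)$ we have $Z(\gamma)=0$, and more to the point the interval $(\gamma,\rho)\subset(\gamma,\gamma')$ has length $O(1)$ (indeed $o(1)$, by the standard gap estimate for consecutive zeros), so for any $U\in(0,\rho-\gamma)$ the hypothesis $0<U<U_0=T^{1/3+2\epsilon}$ of the Theorem is met with $T=\gamma$. Hence (\ref{2.1}) applies verbatim with $T$ replaced by $\gamma$, which gives exactly (\ref{3.2}), provided we know that the map $U\mapsto\alpha(\gamma,U)$ sweeps out the full interval $(0,\beta(\gamma,\rho))$ as $U$ ranges over $(0,\rho-\gamma)$.

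First I would justify that sweep. By construction $[\rho,\tfrac12\varphi(\rho)]$ is the \emph{first} inflection point to the right of $\gamma$, so on $(\gamma,\rho)$ the curve $y=\tfrac12\varphi(T)$ is convex (the right-neighbourhood convexity at $\gamma$ persists until the first inflection). For a convex arc the slope of the chord from the fixed left endpoint $[\gamma,\tfrac12\varphi(\gamma)]$ to a moving right endpoint $[\gamma+U,\tfrac12\varphi(\gamma+U)]$ is a continuous, strictly increasing function of $U$ on $(0,\rho-\gamma)$; its infimum as $U\to0^+$ is $\tfrac12\varphi'(\gamma)\ge 0$ and its value at $U=\rho-\gamma$ is $\tan[\beta(\gamma,\rho)]$. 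Since $\varphi'(\rho)>0$ and, by the properties of Jacob's ladders used in \cite{7}, $\varphi'$ is nonnegative on the relevant range, the chord-slope function is continuous and takes every value in the interval whose tangent endpoints are $0$ (or $\arctan\tfrac12\varphi'(\gamma)$) and $\beta(\gamma,\rho)$; in particular every $\alpha\in(0,\beta(\gamma,\rho))$ is attained for exactly one $U=U(\gamma,\alpha)\in(0,\rho-\gamma)$. This establishes the parametrisation claimed in (\ref{3.2}), and the estimate itself is then just (\ref{2.1}) at $T=\gamma$.

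For part (B) I would argue identically to the passage preceding the first Corollary in Section 2: fix the "main chord" to be the one joining the points (\ref{3.1}), i.e. the chord with slope $\tan[\beta(\gamma,\rho)]$, and consider the family of all chords of $y=\tfrac12\varphi(T)$ parallel to it with endpoints $[N,\tfrac12\varphi(N)]$, $[M,\tfrac12\varphi(M)]$ and $\gamma<N<M<\rho$. Convexity of the arc on $(\gamma,\rho)$ guarantees that this family is a continuum (the parallel chords foliate a region below the main chord), and for each such chord $\tfrac12[\varphi(M)-\varphi(N)]=(M-N)\tan[\beta(\gamma,\rho)]$ by the parallelism. Feeding this into (\ref{2.5}) with $T$ replaced by $N$ — legitimate since $M-N<\rho-\gamma<U_0$, so (\ref{2.4}) and hence (\ref{2.5}) hold — and noting $\ln(e^{-a}\varphi(N)/2)=\ln(\varphi(N)/2)-a$, one gets (\ref{3.3}) directly, with the single error term $\mathcal{O}(\gamma^{-1/3+4\epsilon})$ because $M-N=O(1)$ absorbs the $\mathcal{O}((M-N)/\ln T)$-type contribution into the already-present power-saving term.

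The main obstacle I anticipate is not any estimation — everything reduces to (\ref{2.1}) and (\ref{2.5}) — but rather the geometric bookkeeping needed to be sure that the relevant chord families are genuinely nonempty continua and that $U(\gamma,\alpha)$ is well defined: this rests on the convexity of $y=\tfrac12\varphi(T)$ on $(\gamma,\rho)$ and on $\varphi'(\rho)>0$, both of which are asserted in the paragraph introducing Section 3 from the elementary properties of Jacob's ladders, so the work is to quote and apply them carefully rather than to reprove them. One should also check that $\rho-\gamma$, while possibly tiny, is positive for every sufficiently large zero $\gamma$ — this is exactly the content of "there exists the minimal value of $\rho\in(\gamma,\gamma')$" together with $\gamma<\gamma'$, i.e. simplicity-free separation of consecutive zeros, which holds unconditionally.
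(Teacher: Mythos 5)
Your proposal is correct and follows essentially the same route as the paper, which simply derives the Corollary by applying the Theorem (formula (\ref{2.1})) at $T=\gamma$ for part (A) and, via the chord identity $\frac{1}{2}[\varphi(M)-\varphi(N)]=(M-N)\tan[\beta(\gamma,\rho)]$, at $T=N$ for part (B), relying on the convexity of $y=\frac{1}{2}\varphi(T)$ on $(\gamma,\rho)$ established in the paragraph introducing the section. You in fact supply more detail than the paper does (the intermediate-value argument showing the chord angle sweeps out all of $(0,\beta(\gamma,\rho))$), and your only stray remark — about absorbing an $\mathcal{O}((M-N)/\ln T)$ term that does not actually arise in (\ref{3.3}) since $\tan[\beta(\gamma,\rho)]$ is retained rather than expanded asymptotically — is harmless.
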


\begin{remark}

The notion \emph{microscopic parts} of the Hardy-Littlewood integral has its natural origin in the following: by Karacuba's selbergian estimate
(see \cite{5}, p. 265) we know that for \emph{almost all} intervals $[\gamma,\gamma']\subset [T,T+T^{1/3+2\epsilon}]$ we have
\be \label{3.4}
\gamma'-\gamma<A\frac{\ln\ln T}{\ln T}\to 0,\ T\to\infty .
\ee

\end{remark}

\begin{remark}

In connection with (\ref{3.4}) we can remind that if the Riemann conjecture holds true then the Littlewood's estimate takes place
\bdis
\gamma'-\gamma<\frac{A}{\ln\ln \gamma}\to 0,\ \gamma\to\infty ,
\edis
(see \cite{8}, p. 296, simple consequence of the estimate $S(T)=\mcal{O}(\ln T/\ln\ln T)$).

\end{remark}

\begin{remark}

It is obvious that the formulae (\ref{3.2}) and (\ref{3.3}) for microscopic parts of the Hardy-Littlewood integral cannot be obtained from
the Balasubramanian's formula or by estimates by Heath-Brown and Ivic.

\end{remark}

\section{Second class of formulae for parts of the Hardy-Littlewood integral beginning in zeroes of the function $\zeta(\frac{1}{2}+iT)$}

Let $T=\gamma,\bar{\gamma}$ be a pair of zeroes of the function $\zeta(\frac{1}{2}+iT)$, where $\bar{\gamma}$ obeys the following conditions
\bdis
\bar{\gamma}=\gamma+\gamma^{1/3+2\epsilon}+\Delta(\gamma),\ 0\leq \Delta(\gamma)=\mcal{O}(\gamma^{1/4+\epsilon}) ,
\edis
(see the Hardy-Littlewood estimate for the distance between the neighbouring zeroes, \cite{3}, pp. 125, 177-184). Consequently,
\be \label{4.1}
U(\gamma)=\gamma^{1/3+2\epsilon}+\Delta(\gamma)\sim \gamma^{1/3+2\epsilon},\ \gamma\to\infty .
\ee
For the chord that binds the points
\be \label{4.2}
[\gamma,\frac{1}{2}\varphi(\gamma)],\ [\bar{\gamma},\frac{1}{2}\varphi(\bar{\gamma})]
\ee
we have by (\ref{2.6}) and (\ref{4.1})
\be \label{4.3}
\tan[\alpha(\gamma,U(\gamma))]=1-\frac{1-c}{\ln \gamma}+\mcal{O}\left(\frac{1}{\ln^2\gamma}\right) .
\ee
The continuous curve $y=\frac{1}{2}\varphi(T)$ lies below the chord given by points (\ref{4.2}) on some right neighbourhood of the point $T=\gamma$ and
this curve lies above that chord on some left neighbourhood of the point $T=\bar{\gamma}$. Therefore there exists a common point of the curve and
the chord. Let $[\bar{\rho},\frac{1}{2}\varphi(\bar{\rho})],\ \bar{\rho}\in (\gamma,\bar{\gamma})$ be such a common point that is the closest to
the point $[\gamma,\frac{1}{2}\varphi(\gamma)]$. Then we obtain from our Theorem

\begin{cor}

For every sufficiently big zero $T=\gamma$ of the function $\zeta(\frac{1}{2}+iT)$ we have the following formulae for the parts (\ref{1.8}) of the
Hardy-Littlewood integral
\begin{itemize}

\item[(A)] continuum of formulae for the rotating chord
\begin{eqnarray} \label{4.4}
\int_\gamma^{\gamma+U}Z^2(t){\rm d}t&=&\left( U\ln\frac{\varphi(\gamma)}{2}-aU\right)\tan\alpha +
\mcal{O}\left( \frac{1}{\gamma^{1/3-4\epsilon}}\right), \nonumber \\
& & \tan\alpha\in [\eta,1-\eta],\ U=U(\gamma,\alpha)\in (0,\bar{\rho}-\gamma) ,
\end{eqnarray}
where $\alpha=\alpha(\gamma,U)$ is the angle of the rotating chord binding the points $[\gamma,\frac{1}{2}\varphi(\gamma)]$ and
$[\gamma+U,\frac{1}{2}\varphi(\gamma+U)]$, and $0<\eta$ is an arbitrarily small number,

\item[(B)] continuum of formulae for the chords parallel to the chord binding the points (\ref{4.2})
\begin{eqnarray} \label{4.5}
\int_N^MZ^2(t){\rm d}t&=&(M-N)\ln N+(2c-\ln 2\pi)(M-N)+\nonumber \\
&+&\mcal{O}\left(\frac{M-N}{\ln \gamma}\right)+\mcal{O}\left(\frac{1}{\gamma^{1/3-4\epsilon}}\right),\ \gamma\leq N<M\leq \bar{\rho} .
\end{eqnarray}

\end{itemize}

\end{cor}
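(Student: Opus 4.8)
The plan is to obtain both formulae as geometric specializations of the Theorem, exactly as was done for the previous two corollaries. For part (A), I would start from the general tangent law \eqref{2.1}, valid for all $0<U<U_0=T^{1/3+2\epsilon}$; since $U(\gamma)\sim\gamma^{1/3+2\epsilon}$ by \eqref{4.1}, every point $\gamma+U$ with $U\in(0,\bar\rho-\gamma)$ lies inside the admissible range, so \eqref{2.1} applies with $T=\gamma$ and gives precisely the displayed identity with error $\mcal{O}(\gamma^{-1/3+4\epsilon})$. The only thing to justify is the constraint $\tan\alpha\in[\eta,1-\eta]$: as the chord rotates from the degenerate position at $U=0^+$ (where, by the convexity of $\frac12\varphi$ on a right neighbourhood of $\gamma$ together with $\varphi'>0$ there, one has $\tan\alpha$ below the main-chord value) up to the position $U=\bar\rho-\gamma$ (where, since $\bar\rho$ is the first crossing of the main chord \eqref{4.2}, the rotating chord coincides with the main chord and $\tan\alpha=\tan[\alpha(\gamma,U(\gamma))]=1-(1-c)/\ln\gamma+\mcal{O}(\ln^{-2}\gamma)$ by \eqref{4.3}), the continuity of $\varphi$ forces $\tan\alpha$ to sweep through an interval whose closure contains $[\eta,1-\eta]$ for any fixed $\eta>0$ once $\gamma$ is large enough. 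Thus for each such target slope there is at least one (in fact a continuum of) $U=U(\gamma,\alpha)\in(0,\bar\rho-\gamma)$, which is the asserted "continuum of formulae."

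For part (B), I would run the parallel-chord argument of Corollary following \eqref{2.7}: take any chord of $y=\frac12\varphi(T)$ that is parallel to the main chord \eqref{4.2} and joins points $[N,\frac12\varphi(N)]$, $[M,\frac12\varphi(M)]$ with $\gamma\le N<M\le\bar\rho$. By construction such a chord has the same slope as the main chord, so $\tan[\alpha(N,M-N)]=\tan[\alpha(\gamma,U(\gamma))]=1-(1-c)/\ln\gamma+\mcal{O}(\ln^{-2}\gamma)$ by \eqref{4.3}; that a continuum of such $[N,M]$ exists inside $[\gamma,\bar\rho]$ follows because the chord from $\gamma$ to $\bar\rho$ is itself one member and the slope varies continuously as one slides a chord of fixed direction across the arc. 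Now apply the Theorem with $(T,U)$ replaced by $(N,M-N)$: \eqref{2.1} gives
\bdis
\int_N^M Z^2(t){\rm d}t=\left[(M-N)\ln\frac{\varphi(N)}{2}-a(M-N)\right]\tan[\alpha(N,M-N)]+\mcal{O}\left(\frac{1}{\gamma^{1/3-4\epsilon}}\right),
\edis
where I use $N\ge\gamma$ so the error $\mcal{O}(N^{-1/3+4\epsilon})$ is absorbed into $\mcal{O}(\gamma^{-1/3+4\epsilon})$. Substituting the value of $\tan[\alpha(N,M-N)]$ from \eqref{4.3} and using $\ln\frac{\varphi(N)}{2}=\ln N+\mcal{O}(1)$ together with $\frac{1}{\ln N}\ln\frac{\varphi(N)}{2}=1+\mcal{O}(\ln^{-2}N)$ (the analogue of the formula cited before \eqref{2.6}, applied at $N$ rather than $T$), the main term expands as
\bdis
(M-N)\left(\ln N-(1-c)+\mcal{O}\left(\frac{1}{\ln\gamma}\right)-a\right)=(M-N)\ln N+(2c-\ln 2\pi)(M-N)+\mcal{O}\left(\frac{M-N}{\ln\gamma}\right),
\edis
since $-(1-c)-a=-(1-c)-(\ln 2\pi-1-c)=2c-\ln 2\pi$. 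This is exactly \eqref{4.5}.

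The cross-term bookkeeping in the error is the one place to be careful: in \eqref{4.5} the factor $M-N$ multiplying $1/\ln\gamma$ is not absorbed into $\mcal{O}(\gamma^{-1/3+4\epsilon})$, which is correct because $M-N$ can be as large as $\bar\rho-\gamma=\mcal{O}(\gamma^{1/3+2\epsilon})$ — so that term is genuinely the dominant error, just as in \eqref{2.8}. The only real obstacle is the \emph{existence} claim underlying "continuum of formulae": one must be sure that $\bar\rho>\gamma$ strictly (guaranteed by the stated sign behaviour of $\frac12\varphi$ relative to the chord near the two endpoints together with continuity of $\varphi$ — the intermediate value theorem produces the crossing, and $\bar\rho$ is defined as the \emph{first} such crossing) and that the slope of the rotating/parallel chord genuinely varies over a nondegenerate interval, which again is a continuity statement for $\varphi$ and for the geometry of chords of a fixed direction; no quantitative input beyond \eqref{2.1}, \eqref{4.1} and \eqref{4.3} is needed. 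Everything else is the same elementary chord-angle algebra already used to pass from the Theorem to Corollary \eqref{2.8}.
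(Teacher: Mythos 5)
Your proposal is correct and follows essentially the same route the paper intends: the paper offers no written proof beyond "Then we obtain from our Theorem," and your derivation — specializing \eqref{2.1} at $T=\gamma$, using the intermediate-value/continuity argument for the range of $\tan\alpha$ up to the main-chord slope \eqref{4.3}, and for (B) repeating the parallel-chord computation that produced \eqref{2.8} with $\ln\frac{\varphi(N)}{2}=\ln N+\mcal{O}(1/\ln N)$ and $-(1-c)-a=2c-\ln 2\pi$ — is exactly the intended argument, with the algebra checking out.
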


\begin{remark}

For example, in the case $\alpha=\pi/6$ we have from (\ref{4.4})
\bdis
\int_\gamma^{\gamma+U(\pi/6)}Z^2(t){\rm d}t=\frac{1}{\sqrt{3}}(U\ln\gamma-aU)+\mcal{O}\left(\frac{U}{\ln\gamma}\right)+
\mcal{O}\left(\frac{1}{\gamma^{1/3-4\epsilon}}\right),
\edis
for every sufficiently big zero $T=\gamma$ of the function $\zeta(\frac{1}{2}+iT)$.

\end{remark}

\section{On I.M. Vinogradov's scepticism on possibilities of the method of trigonometric sums}

\subsection{}

I.M. Vinogradov, in the Introduction to his monograph \cite{9},
has analyzed the possibilities of the method of trigonometric sums (H. Weil' sums) in the problem of estimation of the remainder
term $R(N)$ in the asymptotic formula
\be \label{5.1}
\pi(N)-\int_2^N\frac{{\rm d}x}{\ln(x)}=R(N)
\ee
(see \cite{9}, p. 13). He made the following remark in this: \\
\emph{Obviously, it is very hard to move on essentially in solution of the problem to find the order of the $R$ term (willing to find
$R=\mcal{O}(N^{1-c}),\ c=0.000001$) by making use of only some improvements of the H. Weyl's estimates and without making use of further important
progresses in the theory of the zeta-function.} \\

\subsection{}

We will discuss in this section an analogue of the Vinogradov's scepticism in our case of estimation of the remainder term for the Hardy-Littlewood
integral
\be \label{5.2}
\int_0^TZ^2(t){\rm d}t-T\ln T-(2c-1-\ln 2\pi)T=R(T) ,
\ee
(analogue to (\ref{5.1}). The first mathematician who applied the method of trigonometric sums to estimation of $R(T)$ was Titchmarsh (in 1934),
and he received the result $R(T)=\mcal{O}(T^{5/12+\epsilon})$, (see \cite{8}, p. 123). Balasubramanian then improved the
Titchmarsh' result to $R(T)=\mcal{O}(T^{1/3+\epsilon})$, \cite{1}. The crucial result in this field obtained Good \cite{2}:
$R(T)=\Omega(T^{1/4})$.

\begin{remark}

It follows the Good's result that the estimate of type $R(T)=\mcal{O}(T^{1/4+\epsilon})$ - which still represent an unbounded and
unremovable absolute error in the formula for the Hardy-Littlewood integral (see Remark 1) - is the final result for the method of
trigonometric sums in this question.

\end{remark}

\begin{remark}

Since our almost exact formula (\ref{1.3}) has been proved independently on the method of trigonometric sums, it, together with the
Balasubramanian formula (\ref{1.4}), can give a good keystone for an analogue of the I.M. Vinogradov's scepticism in the problem of possibility
to find a finer representation of the Hardy-Littlewood integral $\int_0^T Z^2(t){\rm d}t$.

\end{remark}

\subsection{}

I would like to describe my two main goals when I was working in Titchmarsh' sequences $\{ Z(t_\nu) \}$, where $\{ t_\nu\}$ is the Gramm's
sequence. I wanted to
\begin{itemize}

\item[(a)] improve the knowledge about the local variant of the classical Titchmarsh formulae (see \cite{3}, pp. 221, 222)
\begin{eqnarray*}
& &
\sum_{\nu=\nu_0}^NZ(t_{2\nu})=2N+\mcal{O}(N^{3/4}\ln^{3/4}N), \\
& &
\sum_{\nu=\nu_0}^NZ(t_{2\nu+1})=-2N+\mcal{O}(N^{3/4}\ln^{3/4}N) ,
\end{eqnarray*}

\item[(b)] prove mean-value theorems for the function $Z(t)$ on related non-connected sets.

\end{itemize}

To solve the tasks (a) and (b) I have first used the method of trigonometric sums (see \cite{5}, p. 260, 265, 266, \cite{4}, p. 37).
In the task (a) I have improved the Titchmarsh exponent as $3/4\to 1/6$, i.e. I improved that exponent by 77.7\%. \\

In the task (b) I obtained a new class of mean-value theorems (see \cite{6}), corresponding to the exponent $1/6$

\begin{eqnarray*}
& &
\frac{1}{m\{ G_1(x)\}}\int_{G_1(x)}Z(t){\rm d}t\sim 2\frac{\sin(x)}{x} , \\
& &
\frac{1}{m\{ G_2(y)\}}\int_{G_2(y)}Z(t){\rm d}t\sim -2\frac{\sin(y)}{y},\quad T\to\infty .
\end{eqnarray*}

In connection with the task (a) I have also essentially improved the Hardy-Littlewood exponent $1/4$ (since 1918) to $1/6$ (problem of
estimation of distance of neighbouring zeroes of the function $\zeta(\frac{1}{2}+iT)$, see \cite{3}, p. 125, 177-184). \\

Let us follow the sequence of improvements:
\begin{itemize}
\item Moser - 33.3\% improvement of the Hardy-Littlewood exponent $1/4$
\item Karacuba - 6.25\% improvement of the exponent $1/6$
\item Ivic - 0.19\% improvement of the Karacuba's exponent.

\end{itemize}

\begin{remark}

The sequence of improvements of the exponent of type $0.19\%, \dots $ show that scepticism of I.M. Vinogradov takes place also in
the possibility of successful application of the method of trigonometric sums in the problem of crucial improvement of the exponent $1/6$.

\end{remark}

\thanks{I would like to thank Michal Demetrian for helping me with the electronic form of this work.}

\end{document}